\newtheorem{proposition}{Proposition}
\newtheorem{theorem}{Theorem}
\newtheorem{lemma}{Lemma}
\newtheorem{corollary}{Corollary}
\newtheorem{conjecture}{Conjecture}
\newtheorem{question}{Question}
\newcommand{\cS}{\mathcal{S}}
\newcommand{\orthant}[2]{\RR_+^{#1,(#2)}}
\newcommand{\psdcone}[2]{\cS_+^{#1,(#2)}}
\newcommand{\ones}{1}
\renewcommand{\S}{\cS}%\mathbf{H}}
\newcommand{\psd}{\succeq}
\newcommand{\pd}{\succ}
\newcommand{\RR}{\mathbb{R}}
\newcommand{\tr}{\textup{tr}}
\DeclareMathOperator{\diag}{diag}
\title{A spectrahedral representation of the first derivative\\ relaxation of the positive semidefinite cone}
\author{James Saunderson\thanks{Department of Electrical and Computer Systems Engineering, Monash University, VIC 3800, Australia. Email: \texttt{james.saunderson{@}monash.edu}}}
\begin{document}
\maketitle

\begin{abstract}
	If $X$ is an $n\times n$ symmetric matrix, then the directional derivative of $X \mapsto \det(X)$ in the direction $I$ is the elementary 
symmetric polynomial of degree $n-1$ in the eigenvalues of $X$. This is a polynomial in the entries of $X$ with the property that it is hyperbolic
with respect to the direction $I$. The corresponding hyperbolicity cone is a relaxation of the positive semidefinite (PSD) cone
known as the first derivative relaxation (or Renegar derivative) of the PSD cone. A spectrahedal cone is a convex cone that has a representation as the intersection of a 
subspace with the cone of PSD matrices in some dimension. We show that the first derivative relaxation of the PSD cone is a spectrahedral cone, and give 
an explicit spectrahedral description of size $\binom{n+1}{2}-1$. The construction provides a new explicit example of a hyperbolicity cone that is 
also a spectrahedron. This is consistent with the generalized Lax conjecture, which conjectures that every hyperbolicity cone is a spectrahedron.

\end{abstract}

\section{Introduction}
\label{sec:intro}

\subsection{Preliminaries}

\paragraph{Hyperbolic polynomials, hyperbolicity cones, and spectrahedra}

A multivariate polynomial $p$, homogeneous of degree $d$ in $n$ variables, is
\emph{hyperbolic with respect to $e\in \RR^n$} if $p(e) \neq 0$ and for all
$x$, the univariate polynomial $t\mapsto p(x-te)$ has only real roots.
Associated with such a polynomial is a cone 
\[ \Lambda_{+}(p,e) = \{x\in \RR^n: \textup{all roots of $t\mapsto p(x-te)$ are non-negative}\}.\] 
A foundational result of G\r{a}rding~\cite{gaarding1959inequality} is that $\Lambda_{+}(p,e)$ is actually a
convex cone, called the \emph{closed hyperbolicity cone associated with $p$ and $e$}. 

For example $p(x) = \prod_{i=1}^{n}x_i$ is hyperbolic with respect to
$\ones_n$, the vector of all ones, and the corresponding closed hyperbolicity cone is the
non-negative orthant, $\RR_{+}^n$.  Similarly $p(X) = \det(X)$ (where $X$ is a
symmetric $n\times n$ matrix), is hyperbolic with respect to the identity
matrix $I$, and the corresponding closed hyperbolicity cone is the positive
semidefinite cone $\cS_{+}^n$.   

If a polynomial $p$ has a representation of the form
\begin{equation}
	\label{eq:def-det}
	 p(x) = \det\left(\textstyle{\sum_{i=1}^{n}}A_ix_i\right)
\end{equation}
for symmetric matrices $A_1,\ldots,A_n$, and there exists $e\in \RR^n$ such
that $\sum_{i=1}^{n}A_ie_i$ is positive definite, we say that $p$ has a
\emph{definite determinantal representation}.  In this case $p$ is hyperbolic
with respect to $e$. The associated closed hyperbolicity cone is
\begin{equation}	
\label{eq:spectrahedral} 
K = \bigg\{x\in \RR^n\;:\; \sum_{i=1}^{n}A_i x_i \psd 0\bigg\}
\end{equation}
where we write $X \psd 0$ to indicate that $X$ is positive semidefinite (and $X
\pd 0$ to indicate that $X$ is positive definite).  Such convex cones are
called \emph{spectrahedral cones}. If the matrices $A_1,A_2,\ldots,A_n$ are
$d\times d$ we call~\eqref{eq:spectrahedral} a \emph{spectrahedral
representation of size $d$}.

\paragraph{Derivative relaxations}
One way to produce new hyperbolic polynomials is to take directional
derivatives of hyperbolic polynomials in directions of
hyperbolicity~\cite[Section 3.10]{atiyah1970lacunas}, a  construction emphasized in the context of optimization by
Renegar~\cite{renegar2006hyperbolic}.  If $p$ has degree $d$ and is hyperbolic
with respect to $e$, then for $k=0,1,\ldots,d$, the $k$th directional
derivative in the direction $e$, i.e.,  
\[ D_e^{(k)}p(x) = \left.\frac{d^k}{dt^k}p(x+te)\right|_{t=0},\]
is also hyperbolic with respect to $e$. Moreover 
\[ \Lambda_{+}(D_e^{(k)}p,e) \supseteq \Lambda_{+}(D_e^{(k-1)}p,e) \supseteq \cdots \supseteq \Lambda_{+}(p,e)\] 
so the hyperbolicity cones of the directional derivatives form a sequence of
\emph{relaxations} of the original hyperbolcity cone. 

\begin{itemize}
	\item Suppose $p(x) = \prod_{i=1}^{n}x_i$ and $e=\ones_n$. Then, for $k=0,1,\ldots,n$, 
\[ D_{\ones_n}^{(k)}p(x) = {k!}e_{n-k}(x)\]
where $e_{n-k}$  is the elementary symmetric polynomial of degree $n-k$ in $n$ variables. 
We use the notation $\orthant{n}{k}$ for $\Lambda_+(e_{n-k},\ones_n)$, the closed hyperbolicity cone corresponding to $e_{n-k}$. 
	\item Suppose $p(X) = \det(X)$ is the determinant restricted to $n\times n$ symmetric matrices, 
	and $e=I_n$ is the $n\times n$ identity matrix. Then, for $k=0,1,\ldots,n$,  
\[ D_{I_n}^{(k)}p(X) = {k!}\,E_{n-k}(X) = {k!}\,e_{n-k}(\lambda(X))\]
where $E_{n-k}(X)$ is the elementary symmetric polynomial of degree $n-k$ in
the eigenvalues of $X$ or, equivalently, the coefficient of $t^k$ in
$\det(X+tI_n)$. We use the notation $\psdcone{n}{k}$ for $\Lambda_+(E_{n-k},I_n)$, the closed
hyperbolicity cone corresponding to $E_{n-k}$. We use the notation $\lambda(X)$ for the 
eigenvalues of a symmetric matrix $X$ ordered so that $|\lambda_1(X)| \geq |\lambda_2(X)| \geq \cdots \geq |\lambda_n(X)|$. 
We use this order so that $\lambda_i(X^2) = \lambda_i(X)^2$ for all $i$.
\end{itemize}

% \paragraph{Hyperbolic programming}
% Hyperbolicity cones are of particular interest in optimization because the
% function $-\log(p(x))$ is a self-concordant barrier function for the
% corresponding hyperbolicity cone $\Lambda_{+}(p,e)$~\cite{guler1997hyperbolic}.
% Thus, as long as $p$ and its gradient and Hessian can be evaluated efficiently,
% polynomial-time interior point methods are available for hyperbolic
% programming, the maximization or minimization of a linear functional over an
% affine slice of a hyperbolicity cone. Hyperbolic programming includes
% semidefinite programming as a special case.  

The focus of this paper is the cone $\psdcone{n}{1}$, the hyperbolicity cone
associated with $E_{n-1}$. In particular, we consider whether
$\psdcone{n}{1}$ can be expressed as a `slice' of some higher dimensional
positive semidefinite cone.  Such a description allows one
to reformulate hyperbolic programs with respect to $\psdcone{n}{1}$ (linear optimization over affine `slices' of $\psdcone{n}{1}$)
as
semidefinite programs.

\paragraph{Generalized Lax conjecture}

We have seen that every spectrahedral cone is a closed hyperbolicity cone. 
The \emph{generalized Lax conjecture} asks whether the converse holds, i.e.,
whether every closed hyperbolicity cone is also a spectrahedral cone.  The
original Lax conjecture, now a theorem due to Helton and
Vinnikov~\cite{helton2007linear} (see also~\cite{lewis2005lax}), states that if
$p$ is a trivariate polynomial, homogeneous of degree $d$, and  hyperbolic with
respect to $e\in \RR^3$, then $p$ has a definite determinantal representation.
While a direct generalization of this algebraic result does not hold in higher
dimensions~\cite{branden2011obstructions}, the following geometric conjecture
remains open.
\begin{conjecture}[Generalized Lax Conjecture (geometric version)]
\label{conj:geo}
Every closed hyperbolicity cone is spectrahedral.
\end{conjecture}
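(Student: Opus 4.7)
The plan is to attempt induction on the degree $d$ of the hyperbolic polynomial $p$ cutting out the cone, using the trivariate case as a nontrivial base and reducing higher degrees via Renegar derivatives and low-dimensional restrictions. The base cases are secure: a hyperbolic polynomial of degree $1$ cuts out a half-space, which is trivially spectrahedral; and any trivariate hyperbolic polynomial has a definite determinantal representation by the Helton--Vinnikov theorem, so its hyperbolicity cone is already in the form \eqref{eq:spectrahedral}. The inductive task is to pass from ``every hyperbolicity cone defined by a hyperbolic polynomial of degree less than $d$ (in any number of variables) is spectrahedral'' to the same statement in degree $d$.

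First I would try to exploit the inclusion $\Lambda_+(p,e) \subseteq \Lambda_+(D_e^{(1)}p,e)$, in which the outer cone is spectrahedral by induction, and attempt to \emph{tighten} the containment using restrictions. For each subspace $V \subseteq \RR^n$ with $e\in V$ the restriction $p|_V$ is hyperbolic with respect to $e$, and when $\dim V \le 3$ the cone $\Lambda_+(p|_V,e)$ is spectrahedral by the base cases. One has
\[ \Lambda_+(p,e) \;=\; \bigcap_{V\ni e,\; \dim V = 3} \pi_V^{-1}\!\left(\Lambda_+(p|_V,e)\right), \]
where $\pi_V$ is coordinate projection onto $V$; the equality follows because membership in $\Lambda_+(p,e)$ is detected by univariate restrictions along lines through $e$, and every such line lies in a three-dimensional subspace containing $e$. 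Thus $\Lambda_+(p,e)$ is (a priori, an infinite) intersection of spectrahedral cones, and the remaining step is to show that this intersection is in fact itself a spectrahedron.

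The main obstacle -- and where every known line of attack stalls -- is precisely this finiteness step. Infinite intersections of spectrahedra need not be spectrahedra, and there is no obvious combinatorial or algebraic rule for selecting a finite sufficient subfamily of three-dimensional slices. Compounding this, Br\"and\'en's obstructions~\cite{branden2011obstructions} rule out the cleanest algebraic route: no power $p^k$ of a general hyperbolic $p$ need admit a definite determinantal representation, so one cannot hope to manufacture a spectrahedral representation from $p$ by purely algebraic manipulation. Any successful proof must therefore be \emph{genuinely geometric}, producing auxiliary matrices whose spectrahedron coincides with $\Lambda_+(p,e)$ without corresponding to any determinantal representation of $p$ itself. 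The explicit construction for $\psdcone{n}{1}$ developed later in this paper is an instance of exactly such a geometric construction, and the principal hope of this plan would be that its structure suggests how to handle the general case; extracting such a principle is the deepest step of the argument, and I do not presently see how to carry it out -- which is consistent with the conjecture's open status.
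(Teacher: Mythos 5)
The statement you were asked to prove is Conjecture~\ref{conj:geo}, the generalized Lax conjecture, which the paper explicitly records as \emph{open}: the paper proves only the special case that $\psdcone{n}{1}$ is spectrahedral (Theorem~\ref{thm:main}). So there is no proof in the paper to compare against, and your proposal --- appropriately --- does not claim to close the argument either. You correctly locate the difficulty (passing from an infinite family of low-dimensional certificates to one finite spectrahedral description), and you correctly note that Br\"and\'en's obstructions block the naive algebraic route through determinantal representations of powers of $p$.

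There is, however, a concrete error in the partial argument you do give. The identity
\[ \Lambda_+(p,e) \;=\; \bigcap_{V\ni e,\ \dim V = 3} \pi_V^{-1}\!\left(\Lambda_+(p|_V,e)\right) \]
is false as written. What is true is that $\Lambda_+(p,e)\cap V = \Lambda_+(p|_V,e)$ for any subspace $V$ containing $e$; but the preimage of $\Lambda_+(p|_V,e)$ under a projection $\pi_V$ is a different and generally much larger set, and a point $x\in\Lambda_+(p,e)$ need not project into $\Lambda_+(p|_V,e)$ when $x\notin V$. For example, with $p=\prod_i x_i$ and $e=\ones_n$, the orthogonal projection of a nonnegative vector onto a $3$-dimensional subspace containing $\ones_n$ can have negative coordinates. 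The one inclusion that survives --- the intersection is contained in $\Lambda_+(p,e)$, seen by choosing $V$ containing both $x$ and $e$ so that $\pi_V(x)=x$ --- is the direction you do not need. So even the reformulation of $\Lambda_+(p,e)$ as an infinite intersection of spectrahedra is not established, independently of the (acknowledged, and genuinely open) finiteness problem. A correct starting point of the flavour you want is the algebraic version, Conjecture~\ref{conj:alg}: exhibit a hyperbolic multiplier $q$ with $\Lambda_+(q,e)\supseteq\Lambda_+(p,e)$ such that $qp$ has a definite determinantal representation. That is exactly the strategy the paper carries out, for the single cone $\psdcone{n}{1}$, in Theorem~\ref{thm:main-alg}; no one currently knows how to do it in general.
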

An equivalent algebraic formulation of this conjecture is as follows. 
\begin{conjecture}[Generalized Lax Conjecture (algebraic version)]
\label{conj:alg}
If $p$ is hyperbolic with respect to $e\in \RR^n$, then there exists a
polynomial $q$, hyperbolic with respect to $e\in \RR^n$, such that $qp$ has a
definite determinantal representation and $\Lambda_{+}(q,e)\supseteq \Lambda_+(p,e)$.
\end{conjecture}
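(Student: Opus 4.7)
The statement is the algebraic generalized Lax conjecture, which is widely believed but open for $n \geq 4$, so any plan is necessarily speculative. The most natural line of attack I would take is to try to build the multiplier $q$ explicitly, anchored by Helton and Vinnikov's theorem~\cite{helton2007linear} as the base case: for $n \leq 3$ one may take $q = 1$, so the statement already holds unconditionally.

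For $n \geq 4$ the plan is to reduce by slicing. For each two-dimensional subspace $V \subset \RR^n$ containing $e$, the restriction $p|_V$, homogenized by $e$ to a trivariate polynomial, admits a definite determinantal representation by Helton-Vinnikov. This produces a family of `local' representations indexed by the Grassmannian of planes through $e$, each determined only up to conjugation. I would then try to assemble these local representations into a single global definite determinantal representation of $pq$, with the failure of the local pieces to glue consistently absorbed into the multiplier $q$. Using the standard correspondence between definite determinantal representations and certain line bundles on the projective hypersurface $\{p=0\}$, this becomes the problem of extending a canonical rank-one sheaf from each slice to a globally coherent determinantal datum on $\{pq=0\}$; the additional freedom to multiply by $q$ corresponds to enlarging the ambient hypersurface so that the obstruction to extension can be cancelled.

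The main obstacle is exactly this global gluing step. The hypersurface $\{p=0\}$ can be highly singular for $n \geq 4$, so Riemann-Roch-type tools do not apply off the shelf, and Br\"anden's obstructions~\cite{branden2011obstructions} show that $q=1$ cannot work in general, so the multiplier really must do nontrivial work. In parallel I would try the more concrete ansatz $q = D_e^{(k)} p$ for some $k$, since directional derivatives automatically interlace $p$ and therefore satisfy $\Lambda_+(q,e) \supseteq \Lambda_+(p,e)$, and since the present paper constructs an explicit spectrahedral representation for a cone of exactly this form. Turning such a concrete choice into a definite determinantal representation of $pq$ for arbitrary $p$ is, so far as I can see, subject to the same global geometric obstruction, which is why the conjecture has remained open; the best I would realistically hope to extract from this plan is additional structured families of $p$ for which the gluing can be carried out by hand.
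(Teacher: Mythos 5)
The statement you are trying to prove is stated in the paper as Conjecture~\ref{conj:alg} precisely because it is open: the paper offers no proof of it, and neither does your proposal. You are candid about this, and your individual observations are accurate --- Helton--Vinnikov disposes of $n\le 3$ with $q=1$, Renegar derivatives interlace $p$ so that $\Lambda_+(D_e^{(k)}p,e)\supseteq\Lambda_+(p,e)$, and Br\"and\'en's obstructions~\cite{branden2011obstructions} show the multiplier cannot be dispensed with in general. But the core of your plan, assembling the slice-wise determinantal representations into a global one for $pq$, is not a proof step; it is a restatement of the open problem. You give no mechanism for producing $q$, no bound on its degree, and no way to show the cohomological obstruction to gluing can always be cancelled by enlarging the hypersurface. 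The paper's actual contribution on this front is much more modest and is in Theorem~\ref{thm:main-alg}: for the single polynomial $p = E_{n-1}$ on $\cS^n$ it exhibits an explicit multiplier $q(X)=\prod_{i<j}(\lambda_i(X)+\lambda_j(X))$ with $\Lambda_+(q,I)\supseteq\psdcone{n}{1}$ and $c\,qE_{n-1}=\det(\mathcal{B}(X))$, i.e., one new instance consistent with the conjecture, not an attack on the general case.

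There is also a concrete technical slip in your slicing setup. The Helton--Vinnikov theorem applies to \emph{trivariate} hyperbolic polynomials, so the relevant slices are restrictions of $p$ to three-dimensional subspaces containing $e$, not two-dimensional ones. A restriction of $p$ to a $2$-plane through $e$ is a hyperbolic binary form, which factors into real linear forms and carries essentially no information; there is nothing to homogenize, since $p$ is already homogeneous. Correcting this does not rescue the argument --- the family of Helton--Vinnikov representations over the Grassmannian of $3$-planes through $e$ is non-canonical (each is only defined up to orthogonal conjugation), and making a coherent global choice is exactly where every known attempt stalls --- but you should state the reduction correctly if you pursue this line for structured families of $p$.
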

The algebraic version of the conjecture implies the geometric version because
it implies the existence of a multiplier $q$ such
that the hyperbolicity cone associated with $qp$ is spectrahedral and 
$\Lambda_+(qp,e) = \Lambda_+(p,e)\cap \Lambda_+(q,e) = \Lambda_+(p,e)$. 
To see that the geometric version implies the algebraic version requires
more algebraic machinery, and is discussed, for instance, in~\cite[Section 2]{vinnikov2012lmi}.

\subsection{Main result: a spectrahedral representation of $\psdcone{n}{1}$}

In this paper, we show that $\psdcone{n}{1}$, the first derivative relaxation
of the positive semidefinite cone, is spectrahedral.  We give an explicit
spectrahedral representation of $\psdcone{n}{1}$ (see Theorem~\ref{thm:main} to follow).
Moreover, in Theorem~\ref{thm:main-alg} in Section~\ref{sec:pf} we find an explicit
hyperbolic polynomial $q$ such that $q(X)E_{n-1}(X)$ has a definite
determinantal representation and $\Lambda_{+}(q,I) \supseteq \psdcone{n}{1}$.  
\begin{theorem}
\label{thm:main}
	Let $d = \binom{n+1}{2}-1$ and let $B_1,\ldots,B_d$ be any basis for
the $d$-dimensional space of real symmetric $n\times n$ matrices with trace
zero.  If $\mathcal{B}(X)$ is the $d\times d$ symmetric matrix with $i,j$ entry
equal to $\tr(B_i X B_j)$ then 
	\begin{equation}	
	\label{eq:main-geo} \psdcone{n}{1} = \{X\in \S^n: \mathcal{B}(X) \psd 0\}.
	\end{equation}
\end{theorem}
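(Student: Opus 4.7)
The plan exploits the identity
\[ c^T \mathcal{B}(X) c = \sum_{i,j} c_i c_j \tr(B_i X B_j) = \tr(BXB), \qquad B := \sum_i c_i B_i, \]
valid for every $c\in\RR^d$. Since $\{B_i\}$ is a basis for the traceless symmetric matrices, $B$ ranges over all such matrices as $c$ varies. Hence $\mathcal{B}(X)\psd 0$ is equivalent to the quadratic-form condition $\tr(BXB) \geq 0$ for every traceless symmetric $B$. This reformulation is invariant under $X\mapsto UXU^T$ for orthogonal $U$ (via the compensating $B\mapsto UBU^T$, which preserves tracelessness), so I may assume $X = \diag(\lambda_1,\ldots,\lambda_n)$.

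For such diagonal $X$, writing $\alpha_i := B_{ii}$ and $\beta_{ij} := B_{ij}$ for $i<j$, direct computation yields
\[ \tr(BXB) = \sum_i \lambda_i\,\alpha_i^2 + \sum_{i<j}(\lambda_i+\lambda_j)\,\beta_{ij}^2. \]
The constraint $\tr B = 0$ translates to $\sum_i\alpha_i=0$ while the $\beta_{ij}$'s are unconstrained, and the $\alpha$- and $\beta$-parts decouple. Non-negativity of this form over all feasible $(\alpha,\beta)$ is therefore equivalent to the conjunction of
(i) $\lambda_i+\lambda_j\geq 0$ for all $i<j$, and
(ii) $\sum_i\lambda_i\alpha_i^2\geq 0$ for every $\alpha$ with $\sum_i\alpha_i=0$.
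Choosing $\alpha$ with $\alpha_i=1$, $\alpha_j=-1$, zero elsewhere, shows that (ii) implies (i), so (ii) alone is necessary and sufficient.

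The final and main step is to identify (ii) with $X\in\psdcone{n}{1}$. Condition (ii) says the symmetric operator $\diag(\lambda)$ restricted to the hyperplane $\ones_n^{\perp}$ is positive semidefinite, i.e., all its eigenvalues are non-negative. A standard computation (via Lagrange multipliers, or directly verifying the formula $\det\bigl(tI - \diag(\lambda)|_{\ones_n^{\perp}}\bigr) = \tfrac{1}{n}\sum_i \prod_{j\neq i}(t-\lambda_j)$) identifies these eigenvalues with the roots of $e_{n-1}(\lambda - t\ones_n) = 0$. By the definition of $\orthant{n}{1}$ given in the introduction, $\lambda(X)\in\orthant{n}{1}$ precisely when all $n-1$ of these roots are non-negative, which is exactly (ii); combined with $\psdcone{n}{1}=\{X\in\S^n : \lambda(X)\in\orthant{n}{1}\}$ this yields the theorem. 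I expect the main obstacle is this last identification, which should be done with enough care to cover the degenerate case of repeated eigenvalues (where the secular equation $\sum_i 1/(\lambda_i-t)=0$ develops cancelling poles); passing between the rational and polynomial formulations handles this uniformly.
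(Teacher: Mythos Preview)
Your proof is correct and, at its core, uses the same block-diagonal structure that drives the paper's \emph{algebraic} argument (Lemma~\ref{lem:alg3}): splitting a traceless symmetric $B$ into its diagonal part $\alpha$ and off-diagonal part $\beta$ makes $\tr(B\,\diag(\lambda)\,B)$ decouple into $\sum_i\lambda_i\alpha_i^2$ plus $\sum_{i<j}(\lambda_i+\lambda_j)\beta_{ij}^2$, so that the $\beta$-block gives the factors $\lambda_i+\lambda_j$ and the $\alpha$-block reduces to Sanyal's characterization of $\orthant{n}{1}$. The paper reaches the same place by computing $\det(\mathcal{B}(\diag(x)))$ in that basis and then separately checking (Lemma~\ref{lem:alg2}) that the extra multiplier $q(X)=\prod_{i<j}(\lambda_i+\lambda_j)$ does not shrink the cone; your quadratic-form phrasing absorbs that check automatically via the observation that (ii) implies (i). Compared with the paper's \emph{geometric} argument, you bypass the appeal to Schur's majorization theorem entirely, which is a genuine simplification. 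Your final identification of condition (ii) with $\lambda\in\orthant{n}{1}$ is exactly Proposition~\ref{prop:sanyal} (Sanyal), which the paper invokes directly; your sketch of a derivation via the characteristic polynomial of $\diag(\lambda)|_{\ones_n^\perp}$ is fine and indeed handles repeated eigenvalues uniformly once written as the polynomial identity $\det(V_n^T(tI-\diag(\lambda))V_n)=c\,e_{n-1}(t\ones_n-\lambda)$.
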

Section~\ref{sec:pf} is devoted to the proof of this result. At this stage we
make a few remarks about the statement and some of its consequences.
\begin{itemize}
	\item The spectrahedral representation of $\psdcone{n}{1}$ in
Theorem~\ref{thm:main} has size $d = \binom{n+1}{2}-1 = \frac{1}{2}(n+2)(n-1)$.
This is about half the size of the smallest previously known \emph{projected}
spectrahedral representation of $\psdcone{n}{1}$, i.e., representation 
as the image of a spectrahedral cone under a linear map~\cite{saunderson2015polynomial}.
\item A straightforward extension of this result shows that if $p$ has a definite determinantal representation and $e$ is a direction of 
hyperbolicity for $p$, then the hyperbolicity cone associated with the directional derivative $D_ep$ is spectrahedral.
We discuss this in Section~\ref{sec:consequences}.
	\item It also follows from Theorem~\ref{thm:main} that $\orthant{n}{2}$, the second
derivative relaxation of the orthant in the direction $\ones_n$, has a spectrahedral
representation of size $\binom{n}{2}-1$. We discuss this in
Section~\ref{sec:consequences}.  This representation is significantly smaller
than the size $O(n^{n-3})$ representation constructed by
Br\"and\'en~\cite{branden2014hyperbolicity}, and about half the size of 
the smallest previously known projected spectrahedral
representation of $\orthant{n}{2}$~\cite{saunderson2015polynomial}.
\end{itemize}

\subsection{Related work}
\label{sec:related}
We briefly summarize related work on spectrahedral and projected spectrahedral
representations of the hyperbolicity cones $\orthant{n}{k}$ and
$\psdcone{n}{k}$.  Sanyal~\cite{sanyal2013derivative} showed that
$\orthant{n}{1}$ is spectrahedral by giving the following explicit definite
determinantal representation of $e_{n-1}(x)$, which we use repeatedly in the paper.
\begin{proposition}
	\label{prop:sanyal}
	If $\ones_n^\perp = \{x\in \RR^n\;:\; \ones_n^Tx = 0\}$, and $V_n$ is a
$n\times (n-1)$ matrix with columns spanning $\ones_n^\perp$, then there is a
positive constant $c$ such that 
	\[ c\,e_{n-1}(x) = \det(V_n^T\diag(x)V_n) \;\;\textup{and so}\;\;
\orthant{n}{1} = \{x\in \RR^n: V_n^T\diag(x)V_n \psd 0\}.\]
\end{proposition}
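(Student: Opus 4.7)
The plan is to prove the identity $c\,e_{n-1}(x) = \det(V_n^T\diag(x)V_n)$ by expanding the right hand side via the Cauchy-Binet formula and then computing the resulting coefficients explicitly using the orthogonality of an augmented $n \times n$ matrix. A useful preliminary reduction is that any $V_n$ whose columns span $\ones_n^\perp$ may be factored as $V_n = QR$ with $Q\in \RR^{n\times(n-1)}$ having orthonormal columns and $R\in \RR^{(n-1)\times(n-1)}$ invertible. Since $\det(V_n^T\diag(x)V_n) = (\det R)^2 \det(Q^T\diag(x)Q)$, it suffices to establish the identity for $Q$; the constant $c$ for general $V_n$ is then a positive multiple of that for the orthonormal case.

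In the orthonormal case, applying the Cauchy-Binet formula to the product $V_n^T\cdot(\diag(x)V_n)$ of matrices of sizes $(n-1)\times n$ and $n\times(n-1)$ gives
\[ \det(V_n^T\diag(x)V_n) = \sum_{j=1}^{n} \det(V_n[[n]\setminus\{j\},:])^2 \prod_{i\neq j} x_i.\]
The main step is to show that every coefficient $\det(V_n[[n]\setminus\{j\},:])^2$ equals $1/n$, independently of $j$. For this I would introduce the augmented $n\times n$ matrix $U = [V_n\mid \ones_n/\sqrt{n}]$, which is orthogonal because $V_n$ has orthonormal columns and its column span is exactly $\ones_n^\perp$. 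From the standard cofactor identity $\operatorname{adj}(U) = \det(U)\,U^T$ for orthogonal $U$, the $(j,n)$ cofactor satisfies $(-1)^{j+n}\det(U_{\widehat{j},\widehat{n}}) = \det(U)\cdot U_{j,n} = \pm 1/\sqrt{n}$, where $U_{\widehat{j},\widehat{n}}$ denotes $U$ with row $j$ and column $n$ deleted, which is precisely $V_n[[n]\setminus\{j\},:]$. Squaring yields the uniform value $1/n$, and summing gives $\det(V_n^T\diag(x)V_n) = \tfrac{1}{n}\,e_{n-1}(x)$ in the orthonormal case; for general $V_n$ one obtains a positive constant $c = n/(\det R)^2$.

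The second assertion then follows from the standard fact that a definite determinantal representation of a hyperbolic polynomial furnishes a spectrahedral description of its hyperbolicity cone: the direction $\ones_n$ is a direction of hyperbolicity for $\det(V_n^T\diag(x)V_n)$ because $V_n^T\diag(\ones_n)V_n = V_n^TV_n \pd 0$, and the hyperbolicity cone of this polynomial at $\ones_n$ therefore coincides with the spectrahedron $\{x\in\RR^n: V_n^T\diag(x)V_n\psd 0\}$. Since this polynomial is a positive scalar multiple of $e_{n-1}(x)$, that hyperbolicity cone is $\orthant{n}{1}$. I expect the only nontrivial step to be the cofactor computation via the augmented orthogonal matrix; the reduction to the orthonormal case and the application of Cauchy-Binet are routine.
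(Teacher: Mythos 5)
The paper offers no proof of Proposition~\ref{prop:sanyal}: it is quoted as a known result of Sanyal (also implicit in Choe--Oxley--Sokal--Wagner), so there is no in-paper argument to compare yours against. Judged on its own, your proof is correct and complete, and it is essentially the standard derivation of this representation. The reduction $V_n = QR$ is valid since $\det(V_n^T\diag(x)V_n) = (\det R)^2\det(Q^T\diag(x)Q)$; Cauchy--Binet applied to $V_n^T\cdot(\diag(x)V_n)$ correctly yields $\sum_{j}\det(V_n[[n]\setminus\{j\},:])^2\prod_{i\neq j}x_i$; and the adjugate identity for the bordered orthogonal matrix $U=[V_n\mid \ones_n/\sqrt{n}]$ correctly shows each squared maximal minor equals $1/n$, giving $\det(Q^T\diag(x)Q)=\tfrac{1}{n}e_{n-1}(x)$. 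The passage from the definite determinantal representation (definite because $V_n^T V_n\pd 0$) to the identification of $\orthant{n}{1}$ with the spectrahedron is exactly the standard fact the paper itself states in its preliminaries, so invoking it is fine. The one slip is the value of the constant in the general case: from $\det(V_n^T\diag(x)V_n)=(\det R)^2\cdot\tfrac{1}{n}e_{n-1}(x)$ you should get $c=(\det R)^2/n$, not $n/(\det R)^2$; this does not affect positivity or any conclusion.
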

This representation is also implicit in the work of Choe, Oxley, Sokal, and
Wagner~\cite{choe2004homogeneous}.
Zinchenko~\cite{zinchenko2008hyperbolicity}, gave a projected spectrahedral
representation of $\orthant{n}{1}$.
Br\"and\'en~\cite{branden2014hyperbolicity}, established that each of the cones
$\orthant{n}{k}$ are spectrahedral by constructing graphs $G$ with edges
weighted by linear forms in $x$, such that the edge weighted Laplacian $L_G(x)$
is positive semidefinite if and only if $x\in \orthant{n}{k}$. Amini
showed that the hyperbolicity cones associated with
certain multivariate matching polynomials are
spectrahedral~\cite{amini2016spectrahedrality}, and used these to find
new spectrahedral representations of the cones $\orthant{n}{k}$ of size $\frac{(n-1){!}}{(k-1){!}}+1$.

Explicit projected spectrahedral representations of the cones $\psdcone{n}{k}$
of size $O(n^2\min\{k,n-k\})$ were given by Saunderson and
Parrilo~\cite{saunderson2015polynomial}, leaving open (except in the cases $k=n-2,n-1$) 
the question of whether these cones
are spectrahedra.  The main result of this paper is that $\psdcone{n}{1}$ is a
spectrahedron.

\section{Proof of Theorem~\ref{thm:main}}
\label{sec:pf}
 In this section we give two proofs of Theorem~\ref{thm:main}. The first proof is convex geometric in nature whereas the second is algebraic in nature.
 Both arguments are self-contained. We present the geometric argument first because it suggests the choice of multiplier $q$ for the algebraic argument.

 Both arguments take advantage of the fact that the cone $\psdcone{n}{1}$ satisfies
 $Q\psdcone{n}{1}Q^T = \psdcone{n}{1}$ for all $Q\in O(n)$. 
 One way to see this is to observe that the hyperbolic polynomial $E_{n-1}(X)$ that determines the cone satisfies 
 $E_{n-1}(QXQ^T) = E_{n-1}(X)$ for all $Q\in O(n)$ and the direction of hyperbolicity (the identity) is also invariant under this group action.

\subsection{Geometric argument}

  We begin by stating a slight reformulation of Sanyal's spectrahedral representation (Proposition~\ref{prop:sanyal}).
   \begin{proposition}
   \label{prop:sanyal-geo}
   Let $\ones_n^\perp = \{y\in \RR^n\;:\; \ones_n^Ty = 0\}$ be the subspace of $\RR^n$ orthogonal to $\ones_n$. Then
   \[ \orthant{n}{1} = \{x\in \RR^n\;:\; y^T\diag(x)y \geq 0\;\;\textup{for all $y\in \ones_n^\perp$}\}.\]
   \end{proposition}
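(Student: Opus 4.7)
The plan is to derive the statement directly from Proposition~\ref{prop:sanyal} by unpacking the meaning of positive semidefiniteness of the compressed matrix $V_n^T\diag(x)V_n$. Since the claim is just a coordinate-free restatement of Sanyal's representation, no new geometric input is really needed; the work is in moving between a matrix inequality on $\RR^{n-1}$ and a family of scalar inequalities indexed by $\ones_n^\perp$.

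First, I would recall from Proposition~\ref{prop:sanyal} that $x\in \orthant{n}{1}$ if and only if $V_n^T\diag(x)V_n \psd 0$, where $V_n$ is any $n\times(n-1)$ matrix whose columns form a basis of $\ones_n^\perp$. Next, I would use the defining characterization of PSD matrices: a symmetric $(n-1)\times(n-1)$ matrix $M$ is PSD if and only if $z^TMz \geq 0$ for all $z\in \RR^{n-1}$. Applying this to $M = V_n^T\diag(x)V_n$ gives the equivalent condition
\[ (V_n z)^T \diag(x)(V_n z) \geq 0 \qquad \text{for all } z\in \RR^{n-1}. \]

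The final step is to observe that the map $z\mapsto V_n z$ from $\RR^{n-1}$ to $\RR^n$ has image exactly equal to $\ones_n^\perp$, since the columns of $V_n$ span that subspace. Therefore, as $z$ ranges over $\RR^{n-1}$, the vector $y = V_n z$ ranges over all of $\ones_n^\perp$, and the condition becomes $y^T\diag(x)y \geq 0$ for all $y\in \ones_n^\perp$. Combining this with the first step yields the claimed equality.

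The main (and really the only) obstacle is purely notational: ensuring that the equivalence between a PSD condition on the quadratic form $V_n^T\diag(x)V_n$ and a pointwise nonnegativity condition on $\ones_n^\perp$ is independent of the particular choice of basis $V_n$. This independence is automatic from the fact that the column span of $V_n$ is intrinsically $\ones_n^\perp$, so the reformulation is basis-free and matches the statement of Proposition~\ref{prop:sanyal-geo}.
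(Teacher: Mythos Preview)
Your proposal is correct and follows exactly the same route as the paper's proof: invoke Proposition~\ref{prop:sanyal}, expand the PSD condition on $V_n^T\diag(x)V_n$ as nonnegativity of the associated quadratic form on $\RR^{n-1}$, and then use that the image of $V_n$ is $\ones_n^\perp$ to rewrite this as the desired condition. The paper compresses these steps into a single sentence, but the argument is identical.
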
 
\begin{proof}  This follows from Proposition~\ref{prop:sanyal} since $V_n^T\diag(x)V_n \psd 0$ holds if and only if $u^TV_n^T\diag(x)V_nu \geq 0$ for all $u\in \RR^{n-1}$ which 
holds if and only if $y^T\diag(x)y \geq 0$ for all $y\in \ones_n^\perp$. 
\end{proof}

In this section we establish a `matrix' analogue of Proposition~\ref{prop:sanyal-geo}.
\begin{theorem}
	\label{thm:main-geo}
Let $I_n^\perp = \{Y\in \cS^n\;:\; \tr(Y) = 0\}$ be the subspace of $n\times n$ symmetric matrices with trace zero. Then
\begin{equation}
	\label{eq:main-geo} \psdcone{n}{1} = \{X\in \cS^n\;:\; \tr(YXY)\geq 0,\;\;\textup{for all $Y\in I_n^\perp$}\}.
\end{equation}
\end{theorem}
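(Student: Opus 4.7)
The strategy is to use the $O(n)$-invariance of both sides to reduce to the case $X = \diag(\lambda)$. For the left-hand side this invariance is noted in the preamble; for the right-hand side it follows from $\tr(YXY) = \tr((Q^T Y Q)(Q^T X Q)(Q^T Y Q))$ together with the fact that $Y \mapsto Q^T Y Q$ preserves the trace-zero condition. For $X = \diag(\lambda)$ the left-hand condition reduces, by definition of the hyperbolicity cone, to $\lambda \in \orthant{n}{1}$, and expanding $\tr(YXY) = \tr(XY^2) = \sum_{i,j}\lambda_i Y_{ij}^2$ recasts the right-hand condition as
\[
\sum_i \lambda_i Y_{ii}^2 \;+\; \sum_{i<j}(\lambda_i+\lambda_j)\,Y_{ij}^2 \;\geq\; 0 \qquad \text{for every symmetric $Y$ with $\sum_i Y_{ii} = 0$.}
\]

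The inclusion ``$\supseteq$'' is then immediate: restricting to diagonal $Y = \diag(a)$ with $a\in \ones_n^\perp$ leaves only the first sum, and $\sum_i \lambda_i a_i^2 \geq 0$ for all such $a$ is precisely Proposition~\ref{prop:sanyal-geo}'s characterization of $\orthant{n}{1}$. For the inclusion ``$\subseteq$'', if $\lambda \in \orthant{n}{1}$ then Proposition~\ref{prop:sanyal-geo} (applied to the vector of diagonal entries of $Y$) shows the first sum is non-negative, while choosing $Y$ to be purely off-diagonal (so $\sum_i Y_{ii} = 0$ is automatic) reveals that non-negativity of the second sum for arbitrary off-diagonal entries is equivalent to the pairwise-sum property $\lambda_i + \lambda_j \geq 0$ for all $i \neq j$.

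The main obstacle is thus to show that $\lambda \in \orthant{n}{1}$ forces this pairwise-sum property. Ordering $\lambda_1 \geq \cdots \geq \lambda_n$ it suffices to prove $\lambda_{n-1}+\lambda_n \geq 0$. Standard interlacing of the roots of $t \mapsto e_{n-1}(\lambda - t\ones_n)$ with the $\lambda_i$ already forces $\lambda_{n-1} \geq 0$, leaving only the case $\lambda_n < 0 \leq \lambda_{n-1}$. There the defining inequality $e_{n-1}(\lambda) \geq 0$ (the value at $t=0$ of the polynomial whose roots are all non-negative), combined with the identity $e_{n-1}(\lambda) = \lambda_1\cdots\lambda_{n-1} + \lambda_n\, e_{n-2}(\lambda_1,\ldots,\lambda_{n-1})$, yields
\[
-\lambda_n \;\leq\; \frac{\lambda_1\cdots\lambda_{n-1}}{e_{n-2}(\lambda_1,\ldots,\lambda_{n-1})} \;=\; \frac{1}{\sum_{j<n}\lambda_j^{-1}} \;\leq\; \lambda_{n-1},
\]
the last step using that the sum of positive reciprocals dominates the single term $\lambda_{n-1}^{-1}$. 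This calculation temporarily assumes $\lambda_1,\ldots,\lambda_{n-1}$ are strictly positive; the boundary case follows by perturbing $\lambda \mapsto \lambda + \varepsilon\ones_n$ (which stays in $\orthant{n}{1}$ since $\ones_n$ is its direction of hyperbolicity) and letting $\varepsilon \downarrow 0$.
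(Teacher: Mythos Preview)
Your proof is correct. The reduction via $O(n)$-invariance to diagonal $X=\diag(\lambda)$, and the ``$\supseteq$'' direction via restriction to diagonal $Y$, match the paper exactly. The ``$\subseteq$'' direction, however, is handled differently. The paper writes $\tr(Y\diag(\lambda)Y)=\tr(\diag(Y^2)\diag(\lambda))$ and invokes Schur's theorem that the diagonal of $Y^2$ is a convex combination of permutations of $\lambda(Y)^2$; since $\lambda(Y)\in\ones_n^\perp$, each such permutation, inserted into Sanyal's quadratic form, is non-negative, and convexity finishes. Your route---splitting $\tr(Y\diag(\lambda)Y)$ into diagonal and off-diagonal parts and reducing the latter to the pairwise-sum property $\lambda_i+\lambda_j\geq 0$---is in fact the mechanism behind the paper's \emph{algebraic} proof (the block-diagonalization in Lemma~\ref{lem:alg3} and the inclusion in Lemma~\ref{lem:alg2}), so you have essentially merged the two arguments.

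One remark: your derivation of the pairwise-sum property via interlacing, the identity for $e_{n-1}$, and a harmonic-mean bound is correct but far more work than needed. Since you are already using Proposition~\ref{prop:sanyal-geo}, just take $v=e_i-e_j\in\ones_n^\perp$; then $\sum_\ell \lambda_\ell v_\ell^2=\lambda_i+\lambda_j\geq 0$ immediately. This is exactly how the paper obtains the inclusion $\Lambda_+(q,I_n)\supseteq\psdcone{n}{1}$ in Lemma~\ref{lem:alg2}, and it makes the perturbation step unnecessary.
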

The concrete spectrahedral description given in Theorem~\ref{thm:main} follows immediately from Theorem~\ref{thm:main-geo}. 
Indeed if $B_1,B_2,\ldots,B_d$ are a basis for $I_n^\perp$ then 
an arbitrary $Y\in I_n^\perp$ can be written as $Y = \sum_{i=1}^{d} y_i B_i$. 
The condition $\tr(YXY) \geq 0$ for all $Y\in I_n^\perp$ is equivalent to 
\[ \sum_{i,j=1}^{d}y_iy_j\tr(B_iXB_j) \geq 0\;\;\textup{for all $y\in \RR^{d}$}\;\;\textup{which holds if and only if}\;\; \mathcal{B}(X) \psd 0.\]

\begin{proof}[{of Theorem~\ref{thm:main-geo}}]
The convex cone $\psdcone{n}{1}$ is invariant under the action
of the orthogonal group on $n\times n$ symmetric matrices by congruence transformations. 
Similarly, the convex cone
\[ \{X\in \cS^n\;:\; \tr(YXY) \geq 0 \quad\textup{for all $Y\in I_n^\perp$}\}\]
is invariant under the same action of the orthogonal group. This is because 
$X\in I_n^\perp$ if and only if $QXQ^T\in I^\perp$ for any orthogonal matrix $Q$. 

 Because of these invariance properties, the following (straightforward) result tells us 
 that we can establish Theorem~\ref{thm:main-geo} 
 by showing that the diagonal `slices' of these two convex cones agree.
 \begin{lemma}
 	\label{lem:orth-inv}
 Let $K_1,K_2\subset \cS^n$ be such that $QK_1Q^T = K_1$ for all $Q\in O(n)$
 and $QK_2Q^T = K_2$ for all $Q\in O(n)$. If 
 $\{x\in \RR^n\;:\; \diag(x)\in K_1\} = \{x\in \RR^n\;:\;\diag(x)\in K_2\}$
 then $K_1 = K_2$.
 \end{lemma}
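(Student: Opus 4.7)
The plan is to exploit the spectral theorem, which says every real symmetric matrix is orthogonally conjugate to a diagonal matrix. Given the two $O(n)$-invariant cones $K_1, K_2 \subset \cS^n$, I want to show $K_1 \subseteq K_2$ (the reverse inclusion follows by swapping the roles of $K_1$ and $K_2$).

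First I would pick an arbitrary $X \in K_1$ and diagonalize it: write $X = Q \diag(\lambda) Q^T$ for some $Q \in O(n)$ and some vector of eigenvalues $\lambda \in \RR^n$. Using the $O(n)$-invariance of $K_1$ applied to $Q^T \in O(n)$, we get $\diag(\lambda) = Q^T X Q \in K_1$. In other words, the diagonal matrix of eigenvalues of $X$ lies in $K_1$, so $\lambda$ belongs to the common diagonal slice $\{x \in \RR^n : \diag(x) \in K_1\}$.

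By the hypothesis of the lemma, this common diagonal slice coincides with $\{x \in \RR^n : \diag(x) \in K_2\}$, and therefore $\diag(\lambda) \in K_2$. Finally, applying the $O(n)$-invariance of $K_2$ to $Q \in O(n)$ gives $X = Q \diag(\lambda) Q^T \in K_2$. Since $X \in K_1$ was arbitrary, $K_1 \subseteq K_2$, and the symmetric argument yields equality.

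There is no real obstacle here: the proof is essentially one line once one has the spectral theorem, and the only subtlety is simply noting that orthogonal conjugation by $Q^T$ is itself an element of the group action, so the invariance hypothesis applies in both directions. The lemma is really a packaging of the fact that $O(n)$-invariant sets in $\cS^n$ are in bijective correspondence (via the eigenvalue map) with permutation-invariant sets in $\RR^n$, though we do not even need the permutation invariance here, only the weaker statement about diagonal slices.
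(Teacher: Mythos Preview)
Your proof is correct and is essentially identical to the paper's own argument: diagonalize $X\in K_1$ via the spectral theorem, use $O(n)$-invariance of $K_1$ to place $\diag(\lambda)$ in $K_1$, apply the diagonal-slice hypothesis to move it into $K_2$, and use $O(n)$-invariance of $K_2$ to recover $X\in K_2$. The paper's proof is the same sequence of steps, and your closing remarks about permutation invariance are accurate but, as you note, not needed.
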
 
 \begin{proof}
 Assume that  %$\{x\in \RR^n\;:\; \diag(x)\in K_1\} \subseteq \{x\in \RR^n\;:\;\diag(x)\in K_2\}$ and that 
$X\in K_1$. Then there exists $Q$ such that 
 $QXQ^T = \diag(\lambda(X))$. Since $K_1$ is invariant under orthogonal congruence, $\diag(\lambda(X)) \in K_1$. 
 By assumption, it follows that $\diag(\lambda(X))\in K_2$. Since $K_2$ is invariant under orthogonal congruence,
 $X = Q^T\diag(\lambda(X))Q \in K_2$. This establishes that $K_1 \subseteq K_2$. Reversing the roles of $K_1$ and $K_2$ completes the argument. 

 \end{proof}

\paragraph{Relating the diagonal slices} To complete the proof of
Theorem~\ref{thm:main-geo}, it suffices (by Lemma~\ref{lem:orth-inv}) 
to show that the diagonal slices of the
left- and right-hand sides of~\eqref{eq:main-geo} are equal. Since the diagonal
slice of $\psdcone{n}{1}$ is $\orthant{n}{1}$,  it is enough (by Proposition~\ref{prop:sanyal-geo}) to establish the following
result.
\begin{lemma}
	\label{lem:orthantn1alt}
	\begin{multline*}
	 \{x\in \RR^{n}\;:\; \tr(Y\diag(x)Y) \geq 0 \;\;\textup{for all $Y\in I_n^\perp$}\} = \\
		\{x\in \RR^{n}\;:\; y^T\diag(x)y \geq 0 \;\;\textup{for all $y\in \ones_n^\perp$}\}.
	\end{multline*}
\end{lemma}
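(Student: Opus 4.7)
My plan is to prove the two inclusions separately. The inclusion ``$\subseteq$'' is straightforward: given $y \in \ones_n^\perp$, the diagonal matrix $Y = \diag(y)$ is symmetric with $\tr(Y) = \sum_i y_i = 0$, so $Y \in I_n^\perp$, and a direct computation gives $\tr(Y \diag(x) Y) = \sum_i x_i y_i^2 = y^T \diag(x) y$. Thus any $x$ in the left-hand set also lies in the right-hand set.

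For the reverse inclusion, suppose $x$ satisfies $\sum_i x_i y_i^2 \geq 0$ for every $y \in \ones_n^\perp$, and let $Y \in I_n^\perp$ be arbitrary. Rewriting as $\tr(Y \diag(x) Y) = \sum_i x_i (Y^2)_{ii}$, my goal is to express the diagonal of $Y^2$ in a form to which the hypothesis applies. The key tool is the Schur--Horn theorem: the diagonal of any real symmetric matrix lies in the convex hull of all permutations of its eigenvalue vector. Applied to $Y^2$, whose eigenvalues are $(\lambda_1^2,\ldots,\lambda_n^2)$ where $\lambda$ denotes the eigenvalue vector of $Y$, this gives a decomposition
\[
\diag(Y^2) \;=\; \sum_{\sigma} \alpha_\sigma\, \bigl(\lambda_{\sigma(1)}^2,\ldots,\lambda_{\sigma(n)}^2\bigr)
\]
with $\alpha_\sigma \geq 0$ summing to one.

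The crucial observation is that $\tr(Y) = 0$ translates into $\lambda \in \ones_n^\perp$, and a permutation of $\lambda$ has the same entries, so it again lies in $\ones_n^\perp$. For each $\sigma$ the vector $(\lambda_{\sigma(1)},\ldots,\lambda_{\sigma(n)})$ is therefore a legal test vector in the hypothesis, giving $\sum_i x_i \lambda_{\sigma(i)}^2 \geq 0$. Taking the convex combination with weights $\alpha_\sigma$ yields $\tr(Y \diag(x) Y) \geq 0$, as required.

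The step that needs a nontrivial idea is the bridge from the diagonal of $Y^2$ to vectors in $\ones_n^\perp$. A more naive route via the spectral decomposition $Y = Q\Lambda Q^T$, which gives $(Y^2)_{ii} = \sum_k \lambda_k^2 (q^{(k)}_i)^2$, does not work directly because the eigenvectors $q^{(k)}$ need not lie in $\ones_n^\perp$. The Schur--Horn viewpoint sidesteps this by expressing the diagonal in terms of permutations of $\lambda$ itself, which inherits the trace-zero constraint in exactly the form the hypothesis requires.
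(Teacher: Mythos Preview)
Your proof is correct and follows essentially the same route as the paper: the easy inclusion via $Y=\diag(y)$, and the reverse inclusion via Schur's theorem applied to $Y^2$, using that $\tr(Y)=0$ forces the eigenvalue vector of $Y$ (and all its permutations) to lie in $\ones_n^\perp$. Your remark on why the naive spectral-decomposition attempt fails is a nice addition, but the core argument is identical to the paper's.
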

\begin{proof}
	Suppose that $\tr(Y\diag(x)Y) \geq 0$ for all $Y\in I_n^\perp$. Let
$y\in \ones_n^\perp$. Then $\diag(y)\in I_n^\perp$ and so it follows that
$\tr(\diag(y)\diag(x)\diag(y))  = y^T\diag(x)y \geq 0$. This shows that the
left hand side is a subset of the right hand side.

For the reverse inclusion suppose that $y^T\diag(x)y \geq 0$ for all $y\in
\ones_n^\perp$. Let $Y\in I_n^\perp$. Suppose the symmetric group on $n$ symbols, $S_n$,  acts on $\RR^n$ by permutations. 
Then for every $\sigma \in S_n$, we have that $\sigma \cdot \lambda(Y)\in \ones_n^\perp$ and thus  
\[ \tr(\diag(\sigma \cdot \lambda(Y^2))\diag(x)) = (\sigma \cdot\lambda(Y))^T\diag(x)(\sigma \cdot \lambda(Y))  \geq 0.\]
(Here we have used $\lambda_i(Y^2) = \lambda_i(Y)^2$, by our definition of $\lambda(\cdot)$.)

The diagonal of a symmetric matrix is a convex combination of permutations of
its eigenvalues, a result due to Schur~\cite{schur1923uber} (see also,
e.g.,~\cite{marshall1979inequalities}).  Hence $\diag(Y^2)$ is a convex
combination of permutations of $\lambda(Y^2)$, i.e.,
\[ \diag(Y^2) = \sum_{\sigma\in S_n} \eta_{\sigma}\, (\sigma\cdot \lambda(Y^2))\]
where the $\eta_\sigma$ satisfy
$\eta_{\sigma} \geq 0$ and $\sum_{\sigma \in S_n}\eta_\sigma =1$.  It then
follows that 
\[\tr(Y\diag(x)Y) =  \tr(\diag(Y^2)\diag(x)) = \sum_{\sigma\in S_n}\eta_{\sigma}\tr(\diag(\sigma\cdot\lambda(Y^2))\diag(x)) \geq 0.\]
This shows that the right hand side is a subset of the left hand side.
\end{proof}
This completes the proof of Theorem~\ref{thm:main-geo}.
\end{proof}

\subsection{Algebraic argument}

In this section, we establish the following algebraic version of Theorem~\ref{thm:main}.
\begin{theorem}
	\label{thm:main-alg}
	Let $n\geq 2$ and $B_1,\ldots,B_d$ be a basis for $I_n^\perp$, the subspace of $n\times n$ symmetric matrices with trace zero. Then there is a 
	positive constant $c$ (depending on the choice of basis) such that 
	\begin{enumerate}
		\item $q(X) = \prod_{1\leq i<j\leq n} (\lambda_i(X)+\lambda_j(X))$ is hyperbolic with respect to $I_n$;
		\item the hyperbolicity cone associated with $q$ satisfies 
		\[ \Lambda_{+}(q,I_n) = \{X\in \cS^n\;:\; \lambda_i(X) + \lambda_j(X) \geq 0\;\;\textup{for all $1\leq i<j\leq n$}\} \supseteq \psdcone{n}{1};\]
		\item $q(X)E_{n-1}(X)$ has a definite determinantal representation as
		\[ c\,q(X)E_{n-1}(X) = \det(\mathcal{B}(X)).\]
	\end{enumerate}
\end{theorem}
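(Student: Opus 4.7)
The plan is to handle the three claims in sequence, using orthogonal invariance (as in the geometric proof) to reduce the determinantal identity in (3) to a computation on diagonal matrices, and then invoking Sanyal's spectrahedral representation (Proposition~\ref{prop:sanyal}) to carry out that computation.

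For (1), the polynomial $q(X)$ is a symmetric function of the eigenvalues of $X$ and hence a polynomial in its entries. Direct substitution yields $q(X - tI_n) = \prod_{1 \le i<j\le n}(\lambda_i(X) + \lambda_j(X) - 2t)$, a polynomial in $t$ with only real roots, which establishes hyperbolicity with respect to $I_n$; reading off these roots also gives the description of $\Lambda_{+}(q, I_n)$ in (2). For the inclusion $\Lambda_{+}(q, I_n) \supseteq \psdcone{n}{1}$, both cones are invariant under $X \mapsto QXQ^T$ for $Q \in O(n)$, so Lemma~\ref{lem:orth-inv} (or its one-sided analogue, apparent from its proof) reduces matters to showing $\orthant{n}{1} \subseteq \{x : x_i + x_j \geq 0 \text{ for all } i < j\}$. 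To see this, note that $x \in \orthant{n}{1}$ has at most one negative coordinate (by interlacing of roots of $\prod_i(t - x_i)$ and its derivative); if $x$ has exactly one negative coordinate $x_k$ with the remaining $y_j \geq 0$, then $e_{n-1}(x) = \prod_j y_j + x_k e_{n-2}(y) \geq 0$ forces $|x_k| \leq \prod_j y_j / e_{n-2}(y) = 1/\sum_j (1/y_j) \leq \min_j y_j$, so $x_k$ summed with any other coordinate is non-negative.

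For (3), both sides of the identity are orthogonally invariant polynomials in $X$: the left-hand side because $q$ and $E_{n-1}$ are eigenvalue functions, and the right-hand side because the map $B \mapsto Q^T B Q$ is a Frobenius isometry of $I_n^\perp$, giving $\mathcal{B}(QXQ^T) = M_Q \mathcal{B}(X) M_Q^T$ with $(\det M_Q)^2 = 1$. Both sides are therefore symmetric functions of the eigenvalues of $X$, and it suffices to verify the identity on $X = \diag(x)$. Taking the particular basis $\{E_{ij} + E_{ji} : i<j\} \cup \{E_{kk} - E_{k+1,k+1} : 1 \le k \le n-1\}$ for $I_n^\perp$, a direct trace calculation shows $\mathcal{B}(\diag(x))$ is block diagonal, with the off-diagonal part equal to $\diag(x_i + x_j : i<j)$ and the diagonal part equal to the tridiagonal matrix $T$ with $T_{kk} = x_k + x_{k+1}$ and $T_{k,k+1} = T_{k+1,k} = -x_{k+1}$.

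The key observation is that $T = V_n^T \diag(x) V_n$, where $V_n$ has columns $e_k - e_{k+1}$ (differences of standard basis vectors in $\RR^n$) spanning $\ones_n^\perp$. Proposition~\ref{prop:sanyal} then yields $\det T = c' e_{n-1}(x)$ for a positive $c'$, so combining the blocks gives $\det(\mathcal{B}(\diag(x))) = c' \cdot q(\diag(x))\,E_{n-1}(\diag(x))$; by orthogonal invariance, the identity extends to all symmetric $X$. Changing from this specific basis to an arbitrary basis $\{B_i\}$ multiplies $\det \mathcal{B}$ (and thus the constant) by $(\det M)^2 > 0$, giving a positive $c$ for every basis. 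The main obstacle is the trace calculation establishing the block-diagonal structure of $\mathcal{B}(\diag(x))$ together with the identification of the tridiagonal block as $V_n^T \diag(x) V_n$; the containment in (2) is the other non-routine step, but both are addressed by the reductions outlined above.
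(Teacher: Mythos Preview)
Your proof is correct and follows the same three-lemma structure as the paper, with the same reduction to diagonal matrices via orthogonal invariance and the same block-diagonal computation in part (3); your choice of basis $\{E_{kk}-E_{k+1,k+1}\}$ is just the special case $v_k=e_k-e_{k+1}$ of the paper's general basis $\{\diag(v_k)\}$ for the diagonal block, and the identification $T=V_n^T\diag(x)V_n$ is exactly how the paper reaches Proposition~\ref{prop:sanyal}.

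The one substantive difference is the diagonal-slice inclusion in part (2). The paper observes directly from Proposition~\ref{prop:sanyal-geo} that for $x\in\orthant{n}{1}$ and $v=e_i-e_j\in\ones_n^\perp$ one has $x_i+x_j=v^T\diag(x)v\geq 0$, a one-line argument. Your route via interlacing and the harmonic-mean bound $|x_k|\leq \prod_j y_j/e_{n-2}(y)=\big(\sum_j 1/y_j\big)^{-1}\leq \min_j y_j$ is also valid but longer, and as written it tacitly assumes all $y_j>0$; you should either note that this holds on the interior of $\orthant{n}{1}$ and pass to the closure by continuity, or handle the boundary case separately. The paper's argument avoids this case distinction entirely and ties part (2) to the same Sanyal representation already used in part (3).
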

We remark that $q(X)$ is defined as a symmetric polynomial in the eigenvalues
of $X$, and so can be expressed as a polynomial in the entries of $X$.
Although our argument does not use this fact, it can be shown that $q(X) =
\det(\mathcal{L}_2(X))$ where $\mathcal{L}_2(X)$ is the \emph{second additive
compound matrix} of $X$~\cite{fiedler1974additive}.  This means that $q$ is not
only hyperbolic with respect to $I_n$, but also has a definite determinantal
representation.

\begin{proof}[{of Theorem~\ref{thm:main-alg}}]
{}
The three items in the statement of Theorem~\ref{thm:main-alg} are established
in the following three Lemmas (Lemmas~\ref{lem:alg1},~\ref{lem:alg2}, and~\ref{lem:alg3}).

\begin{lemma}
	\label{lem:alg1}
	If $q(X) = \prod_{1\leq i<j\leq n}(\lambda_i(X)+\lambda_j(X))$ then $q$ is hyperbolic with respect to $I_n$.
\end{lemma}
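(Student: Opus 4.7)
The plan is to verify directly the three requirements for hyperbolicity: that $q$ is a polynomial in the entries of $X$, that $q(I_n)\neq 0$, and that $t\mapsto q(X-tI_n)$ has only real roots for every $X\in\cS^n$.

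First I would argue that $q$ is a polynomial in the entries of $X$. Although the expression $\prod_{i<j}(\lambda_i(X)+\lambda_j(X))$ is written using the eigenvalue function, the product is a \emph{symmetric} function of $\lambda_1(X),\ldots,\lambda_n(X)$ (it does not depend on the ordering used to define $\lambda(\cdot)$). By the fundamental theorem on symmetric polynomials, it can be expressed as a polynomial in the elementary symmetric polynomials $e_1(\lambda(X)),\ldots,e_n(\lambda(X))$, which are (up to sign) the coefficients of the characteristic polynomial $\det(tI_n-X)$, and hence are polynomials in the entries of $X$. This also shows $q$ is homogeneous of degree $\binom{n}{2}$ in $X$.

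Next, I would check the condition $q(I_n)\neq 0$: since the eigenvalues of $I_n$ are all $1$, we have $q(I_n)=\prod_{i<j}(1+1)=2^{\binom{n}{2}}\neq 0$. The remaining task is to show that for every $X\in\cS^n$, the univariate polynomial $t\mapsto q(X-tI_n)$ has only real roots. Since $q$ is a symmetric function of the eigenvalues, and since the multiset of eigenvalues of $X-tI_n$ is $\{\lambda_i(X)-t : i=1,\ldots,n\}$ (regardless of the ordering convention), we obtain
\[ q(X-tI_n) = \prod_{1\leq i<j\leq n}\bigl((\lambda_i(X)-t)+(\lambda_j(X)-t)\bigr) = \prod_{1\leq i<j\leq n}\bigl(\lambda_i(X)+\lambda_j(X)-2t\bigr).\]
The roots of this polynomial in $t$ are exactly $t_{ij}=\tfrac{1}{2}(\lambda_i(X)+\lambda_j(X))$ for $1\leq i<j\leq n$, all of which are real because the eigenvalues of the symmetric matrix $X$ are real. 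This establishes hyperbolicity with respect to $I_n$.

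I don't expect a genuine obstacle here; the only subtle point is recognizing that the paper's choice of eigenvalue ordering (by absolute value) does not interfere, because $q$ depends only on the multiset of eigenvalues. Once that observation is made, the proof is essentially a direct calculation.
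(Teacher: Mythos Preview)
Your proof is correct and follows essentially the same approach as the paper: compute $q(I_n)=2^{\binom{n}{2}}$ and then expand $q(X-tI_n)=\prod_{i<j}(\lambda_i(X)+\lambda_j(X)-2t)$ to read off the real roots. You add the explicit verification that $q$ is a polynomial in the entries of $X$ (which the paper mentions as a remark outside the proof) and a comment on why the eigenvalue ordering is irrelevant, but the core argument is identical.
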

\begin{proof}
	First observe that $q(I_n) = 2^{\binom{n}{2}}\neq 0$. Moreover, for any real $t$, 
	\[ q(X-tI_n) = \prod_{1\leq i<j\leq n}(\lambda_i(X-tI_n) + \lambda_j(X-tI_n)) = \prod_{1\leq i<j\leq n}(\lambda_i(X) + \lambda_j(X) - 2t)\]
	which has $\binom{n}{2}$ real roots given by $\frac{1}{2}(\lambda_i(X)
+ \lambda_j(X))$ for $1\leq i<j\leq n$. Hence $q$ is hyperbolic with respect to
$I_n$. 
\end{proof}

\begin{lemma}
	\label{lem:alg2}
	If $n\geq 2$ then 
	\[ \Lambda_{+}(q,I_n) = \{X\in \cS^n\;:\; \lambda_i(X) + \lambda_j(X) \geq 0\;\;\textup{for all $1\leq i<j\leq n$}\}\supseteq \psdcone{n}{1}.\] 
\end{lemma}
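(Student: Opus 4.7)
The plan is to split the lemma into (i) the set equality $\Lambda_+(q,I_n) = \{X : \lambda_i(X)+\lambda_j(X)\geq 0 \text{ for all } i<j\}$, and (ii) the containment $\{X : \lambda_i(X)+\lambda_j(X)\geq 0\} \supseteq \psdcone{n}{1}$, which is the substantive part. The equality is a direct restatement of the computation already done in the proof of Lemma~\ref{lem:alg1}: the identity $q(X-tI_n)=\prod_{i<j}(\lambda_i(X)+\lambda_j(X)-2t)$ identifies the roots of $t\mapsto q(X-tI_n)$ as the numbers $\tfrac12(\lambda_i(X)+\lambda_j(X))$, and these are all nonnegative iff $X$ lies in the right-hand set.

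For (ii), both cone conditions depend only on the spectrum of $X$. By the identity $E_{n-1}(X-tI_n) = e_{n-1}(\lambda(X)-t\ones_n)$ we have $X \in \psdcone{n}{1}$ iff $\lambda(X)\in \orthant{n}{1}$, and likewise the pairwise-sum condition on $X$ is the pairwise-sum condition on its eigenvalues. The matrix containment therefore reduces to the scalar statement: for every $x\in \orthant{n}{1}$, $x_i+x_j\geq 0$ for all $i<j$. Sorting $x_1\geq\cdots\geq x_n$, the smallest such sum is $x_{n-1}+x_n$, so it suffices to show this is nonnegative.

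The scalar inequality follows from Rolle-theorem interlacing. Setting $p(t) = \prod_i(t-x_i)$, the identity $e_{n-1}(x-t\ones_n) = (-1)^{n-1}p'(t)$ shows that $x\in\orthant{n}{1}$ is equivalent to the smallest critical point $s_{n-1}$ of $p$ being nonnegative, with $s_{n-1}\in[x_n,x_{n-1}]$ by interlacing. The crux is to prove $2s_{n-1}\leq x_{n-1}+x_n$, which combined with $s_{n-1}\geq 0$ gives $x_{n-1}+x_n\geq 0$. In the generic case of distinct $x_i$, rearranging $\sum_i 1/(s_{n-1}-x_i)=0$ (a consequence of $p'(s_{n-1})=0$ and $p(s_{n-1})\neq 0$) yields
\[
\frac{1}{s_{n-1}-x_n} \;=\; \sum_{i=1}^{n-1}\frac{1}{x_i-s_{n-1}} \;\geq\; \frac{1}{x_{n-1}-s_{n-1}},
\]
with every quantity strictly positive, and taking reciprocals gives the desired inequality.

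I anticipate the only obstacle is the degenerate case in which some entries coincide (in particular $s_{n-1}\in\{x_{n-1},x_n\}$), which can be dispatched by perturbing $x$ to have distinct entries and passing to the limit, using that both sets in question are closed.
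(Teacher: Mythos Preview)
Your argument is correct. Part (i) matches the paper's. For part (ii), the paper also reduces to the scalar inclusion $\orthant{n}{1}\subseteq\{x:x_i+x_j\geq 0\}$, but then proves it in one line via Sanyal's characterization (Proposition~\ref{prop:sanyal-geo}): for $x\in\orthant{n}{1}$ one has $v^T\diag(x)v\geq 0$ for every $v\in\ones_n^\perp$, and taking $v=e_i-e_j$ gives $x_i+x_j\geq 0$ directly. Your route is genuinely different: you avoid Sanyal's description entirely and instead use the interlacing of the roots of $p'$ with those of $p(t)=\prod_i(t-x_i)$ together with the logarithmic-derivative identity $\sum_i 1/(s_{n-1}-x_i)=0$ to obtain the sharper quantitative bound $x_{n-1}+x_n\geq 2s_{n-1}$. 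The paper's approach is shorter and ties in with the representation of $\orthant{n}{1}$ already used elsewhere in the paper; yours is self-contained (no appeal to Proposition~\ref{prop:sanyal} or~\ref{prop:sanyal-geo}) and yields a bit more information, at the cost of the perturbation/continuity step needed when entries coincide. That step is fine: the inequality $2s_{n-1}\leq x_{n-1}+x_n$ between continuous functions of $x$ holds on the dense open set of vectors with distinct entries, hence everywhere.
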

 \begin{proof}
	Since the roots of $t\mapsto q(X-tI_n)$ are $\frac{1}{2}(\lambda_i(X) +
\lambda_j(X))$, the description of $\Lambda_+(q,I_n)$ is immediate. Both sides
of the inclusion are invariant under congruence by orthogonal matrices. By
Lemma~\ref{lem:orth-inv} it is enough to show that the inclusion holds
for the diagonal slices of both sides. Note that 
	\[ \{x\in \RR^n\;:\; \diag(x)\in \Lambda_{+}(q,I_n)\} = \{x\in \RR^{n}\;:\; x_i+x_j \geq 0\;\;\textup{for all $1\leq i<j\leq n$}\}.\]
	Hence it is enough to establish that
 	\begin{equation}
		\label{eq:incl}
		 \{x\in \RR^n\;:\; x_i+x_j \geq 0\;\;\textup{for all $1\leq i<j\leq n$}\} \supseteq \orthant{n}{1}.
	\end{equation}
	To do so, we use the characterization of $\orthant{n}{1}$ from Proposition~\ref{prop:sanyal-geo}. 
	This tells us that if $x\in \orthant{n}{1}$ then $v^T\diag(x)v = \sum_{\ell=1}^{n}x_\ell v_\ell^2 \geq 0$ 
	for all $v\in \ones_n^\perp$.
	In particular, let $v$ be the element of $\ones_n^\perp$ with $v_i=1$ and $v_j=-1$ and $v_k = 0$ for $k\notin\{i,j\}$. 
	Then, if $x\in \orthant{n}{1}$ it follows that 
	$\sum_{\ell=1}^{n}x_\ell v_\ell^2 = x_i + x_j \geq 0$. This completes the proof.
% 	One interesting way to do this is to use a characterization of
% hyperbolicity cones of square-free polynomials due to Kummer, Plaumann, and
% Vinzant~\cite[Theorem 3.1]{kummer2015hyperbolic}. If $p$ is square-free and
% hyperbolic with respect to $e$ then  
% 	\[ \Lambda_{+}(p,e) = \{x\in \RR^{n}\;:\; D_ep(y) D_xp(y) - p(y)D_{e}D_xp(y) \geq 0\;\;\textup{for all $y\in \RR^n$}\}.\]
% 	Importantly, the expression $D_ep(y) D_xp(y) - p(y)D_eD_xp(y)$ is
% \emph{linear} in $x$. Hence every $y\in \RR^{n}$ defines a linear functional
% that is non-negative on $\Lambda_{+}(p,e)$ (i.e., an element of the dual cone
% $\Lambda_+(p,e)^*$). 
% 
% 	Our aim is to prove that, for any $1\leq i<j\leq n$, if $x\in
% \orthant{n}{1}$ then $x_i+x_j \geq 0$.  Since $\orthant{n}{1} =
% \Lambda_{+}(e_{n-1},\ones_n)$, it is enough to show that for any $1\leq i<j\leq
% n$ there is a choice of $y\in \RR^{n}$ such that 
% 	\begin{equation}	
% 	\label{eq:linf} D_{\ones_n}e_{n-1}(y) D_xe_{n-1}(y) - e_{n-1}(y)D_{\ones_n}D_xe_{n-1}(y) = x_i + x_j.
% 	\end{equation}
% 	We now specify such a choice. Let $y\in \RR^n$ be such that $y_i = y_j = 0$ and $y_k = \alpha$ 
% 	for all $k\neq i,j$. Then 
% 	\[ e_{n-1}(y) = 0\;\;\textup{and}\;\; D_xe_{n-1}(y) = (x_i+x_j)\alpha^{n-2}\;\;\textup{and}\;\; D_{\ones_n}e_{n-1}(y) = 2\alpha^{n-2}.\]
% 	Choosing $\alpha = 2^{-1/(2n-4)}$, we see that for our choice of
% $y$,~\eqref{eq:linf} holds. Since $i$ and $j$ were arbitrary, we can conclude
% that~\eqref{eq:incl} holds, completing the proof. \qed
 \end{proof}

\begin{lemma}
	\label{lem:alg3}
If $B_1,\ldots,B_d$ is a basis for $I^\perp_n$, then there is a positive
constant $c$ (depending on the choice of basis) such that 
\[ c\,q(X)E_{n-1}(X) = \det(\mathcal{B}(X)).\]
\end{lemma}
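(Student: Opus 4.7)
The plan is to reduce to the case of diagonal $X$ using orthogonal invariance, and then to factor $\det(\mathcal{B}(\diag(x)))$ into a block coming from off-diagonal basis elements, whose determinant is $\prod_{i<j}(x_i+x_j)$ up to a constant, and a block coming from diagonal basis elements, whose determinant is a constant multiple of $e_{n-1}(x)$ by Proposition~\ref{prop:sanyal}. As a preliminary reduction, if $B'_i = \sum_j M_{ji} B_j$ is any change of basis, then $\mathcal{B}'(X) = M^T \mathcal{B}(X) M$ and so $\det(\mathcal{B}'(X)) = \det(M)^2 \det(\mathcal{B}(X))$; thus changing the basis only multiplies $c$ by the positive factor $\det(M)^2$, and it suffices to prove the identity for one convenient choice of basis.

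I would take the orthonormal basis (with respect to the Frobenius inner product on $\cS^n$) consisting of $B_{ij} = \tfrac{1}{\sqrt{2}}(E_{ij} + E_{ji})$ for $1 \le i < j \le n$, together with any orthonormal basis $D_1, \ldots, D_{n-1}$ of the trace-zero diagonal matrices. For any $Q \in O(n)$, the set $\{Q^T B_k Q\}_k$ is again an orthonormal basis of $I_n^\perp$, so $Q^T B_k Q = \sum_l U(Q)_{lk} B_l$ for some orthogonal matrix $U(Q)$. A direct substitution yields $\mathcal{B}(QXQ^T) = U(Q)^T \mathcal{B}(X) U(Q)$, hence $\det(\mathcal{B}(QXQ^T)) = \det(\mathcal{B}(X))$. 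Since $q(X) E_{n-1}(X)$ is also invariant under orthogonal congruence, it suffices to prove the identity on diagonal matrices $X = \diag(x)$.

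The core computation is then to block-diagonalize $\mathcal{B}(\diag(x))$. Using the identities $E_{ij}E_{kl} = \delta_{jk} E_{il}$ and $\diag(x) e_j = x_j e_j$, one checks that for $i<j$ and $k<l$ one has $\tr(B_{ij}\diag(x)B_{kl}) = 0$ unless $(i,j) = (k,l)$, in which case the entry equals $\tfrac{1}{2}(x_i+x_j)$; and that $\tr(B_{ij}\diag(x)D_k) = 0$ for every $i<j$ and every $k$. The remaining $(n-1)\times(n-1)$ block has $(k,l)$ entry
\[ \tr(D_k \diag(x) D_l) = \sum_m (D_k)_{mm}\, x_m\, (D_l)_{mm} = v_k^T \diag(x)\, v_l, \]
where $v_k := \diag(D_k) \in \ones_n^\perp$. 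Setting $V = [v_1 \mid \cdots \mid v_{n-1}]$, whose columns span $\ones_n^\perp$, this block is exactly $V^T \diag(x) V$, whose determinant equals a positive constant times $e_{n-1}(x)$ by Proposition~\ref{prop:sanyal}. Multiplying together the $\binom{n}{2}$ scalar blocks and the diagonal block gives $\det(\mathcal{B}(\diag(x))) = c\, q(\diag(x))\, E_{n-1}(\diag(x))$ for a positive constant $c$, and orthogonal invariance extends this to arbitrary $X$.

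The main obstacle is the index bookkeeping needed to verify the block-diagonal structure of $\mathcal{B}(\diag(x))$ with this basis. A secondary but essential subtlety is that the orthogonal invariance of $\det(\mathcal{B}(X))$ as a polynomial in $X$ requires orthonormality of the basis; without this, conjugation by $Q \in O(n)$ gives $\mathcal{B}(QXQ^T) = T(Q)^T \mathcal{B}(X) T(Q)$ with $T(Q)$ only invertible, which is precisely why it is useful to reduce to an orthonormal basis at the outset.
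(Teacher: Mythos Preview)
Your proof is correct and follows essentially the same route as the paper's: reduce to one convenient basis via the $\det(M)^2$ observation, reduce to diagonal $X$ by orthogonal invariance, then block-diagonalize $\mathcal{B}(\diag(x))$ into the off-diagonal piece giving $\prod_{i<j}(x_i+x_j)$ and the diagonal piece handled by Proposition~\ref{prop:sanyal}. One small inaccuracy in your closing remark: orthonormality is convenient but not actually essential for the invariance of $\det(\mathcal{B}(X))$, since conjugation by $Q$ is an orthogonal linear map on $I_n^\perp$ and hence has determinant $\pm 1$ in \emph{any} basis, so $\det T(Q)^2=1$ and $\det(\mathcal{B}(QXQ^T))=\det(\mathcal{B}(X))$ regardless.
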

\begin{proof}
Since both sides are invariant under orthogonal congruence, it is enough to
show that the identity holds for diagonal matrices. In other words, it is
enough to show that 
\[ c\prod_{1\leq i<j\leq n}(x_i+x_j) e_{n-1}(x) = \det(\mathcal{B}(\diag(x))).\] 
Since a change of basis for the subspace of symmetric matrices with trace zero
only changes $\det(\mathcal{B}(X))$ by a positive constant (which is one if the
change of basis is orthogonal with respect to the trace inner product), it is
enough to choose a particular basis for the subspace of symmetric matrices with
trace zero, and show that the identity holds for a particular constant.  

Let $v_1,v_2,\ldots,v_{n-1}$ be a basis for $\ones_n^\perp = \{x\in \RR^{n}\;:\;
\sum_{i=1}^{n}x_i = 0\}$.  Let $M_{ij}$ be the $n\times n$ matrix with a one in the $(i,j)$ and the $(j,i)$ entry, and zeros elsewhere.
Clearly the $M_{ij}$ for $1\leq i<j\leq n$ form a basis for the subspace of symmetric matrices with zero
diagonal. Together $\diag(v_1),\diag(v_2),\ldots,\diag(v_{n-1})$ and $M_{ij}$
for $1\leq i<j\leq n$ form a basis for the subspace of symmetric matrices with
trace zero.

Using this basis we evaluate the matrix $\mathcal{B}(\diag(x))$. We note that
\begin{align*}
	 \tr(\diag(v_i)\diag(x)\diag(v_j)) & = v_i^T\diag(x)v_j\quad\textup{for $1\leq i,j\leq n$}\\
	\tr(\diag(v_i)\diag(x)M_{jk}) & = 0\quad\textup{for all $1\leq i\leq n$ and $1\leq j<k\leq n$}
\end{align*}
since $M_{jk}$ has zero diagonal, and that
\[ 
	\tr(M_{ij}\diag(x)M_{k\ell})  = \begin{cases} x_i+x_j & \textup{if $i=k$ and $j=\ell$}\\ 0 & \textup{otherwise}\end{cases}\]
for all $1\leq i<j\leq n$ and $1\leq k<\ell\leq n$.
This means that $\mathcal{B}(\diag(x))$ is block diagonal, and so
\begin{equation}	
\label{eq:block-det} 
\det(\mathcal{B}(\diag(x))) = \prod_{1\leq i<j\leq n}(x_i+x_j) \det(V_n^T\diag(x)V_n)
\end{equation}
where $V_n$ is the $n\times (n-1)$ matrix with columns $v_1,v_2,\ldots,v_n$. By
Proposition~\ref{prop:sanyal}, there is a positive constant $c$ such that 
\begin{equation}	
\label{eq:sanyal} \det(V_n^T\diag(x)V_n) = c\,e_{n-1}(x),
\end{equation}
Combining~\eqref{eq:block-det} and~\eqref{eq:sanyal} gives the stated result. 
\end{proof}
This completes the proof of Theorem~\ref{thm:main-alg}. 
\end{proof}

% \section{Definite determinantal representation}
% \label{sec:poly}
% \input{poly}

\section{Discussion}
\label{sec:discussion}
\subsection{Consequences of Theorem~\ref{thm:main}}
\label{sec:consequences}
A straightforward consequence of Theorem~\ref{thm:main} is that if $p$ has a
definite determinantal representation, and $e$ is a direction of hyperbolicity
for $p$, then the hyperbolicity cone associated with the directional derivative
$D_ep$ is spectrahedral.
\begin{corollary}
	\label{cor:spec}
	If $p(x) = \det(\sum_{i=1}^{n}A_ix_i)$ for symmetric $\ell\times \ell$
matrices $A_1,\ldots,A_n$, and $A_0 = \sum_{i=1}^{m}A_i e_i$ is positive
definite, then $\Lambda_+(D_ep,e)$ has a spectrahedral representation of size
$\binom{\ell+1}{2}-1$. 
\end{corollary}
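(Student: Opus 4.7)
The plan is to reduce Corollary~\ref{cor:spec} to Theorem~\ref{thm:main} by a congruence change of variables that sends the distinguished direction $A_0$ to the identity, so that the derivative $D_e p$ becomes exactly $E_{\ell-1}$ evaluated at a linear pencil in $x$.

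Concretely, since $A_0 \pd 0$, factor $A_0 = M^T M$ for some invertible $\ell\times \ell$ matrix $M$ and define the new linear pencil $\tilde{A}(x) = M^{-T}A(x)M^{-1}$. Then $\tilde{A}(e) = I_\ell$, and the polynomial $\tilde{p}(x) = \det(\tilde{A}(x)) = p(x)/\det(A_0)$ differs from $p$ by a positive multiplicative constant, so $D_e \tilde{p}$ and $D_e p$ have the same hyperbolicity cone with respect to $e$. Using $\tilde{A}(x+te) = \tilde{A}(x) + t I_\ell$ and the identity $\det(Y + tI_\ell) = \sum_{k=0}^{\ell} t^k E_{\ell-k}(Y)$, one reads off
\[ D_e \tilde{p}(x) = \left.\frac{d}{dt}\right|_{t=0}\det(\tilde{A}(x) + tI_\ell) = E_{\ell-1}(\tilde{A}(x)). \]
The same identity applied to $x-te$ shows that $x\in \Lambda_+(D_e\tilde{p},e)$ if and only if all roots of $t\mapsto E_{\ell-1}(\tilde{A}(x) - tI_\ell)$ are non-negative, which is exactly the condition $\tilde{A}(x)\in \psdcone{\ell}{1}$.

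Applying Theorem~\ref{thm:main} to $\tilde{A}(x)$, with any basis $B_1,\ldots,B_d$ of $I_\ell^\perp$ where $d = \binom{\ell+1}{2}-1$, then yields
\[ \Lambda_+(D_e p,e) = \Lambda_+(D_e\tilde{p},e) = \{x\in \RR^n : \mathcal{B}(\tilde{A}(x)) \psd 0\}. \]
Since $x\mapsto \tilde{A}(x)$ is linear and $Y\mapsto \mathcal{B}(Y)$ is linear entry-by-entry (the $(i,j)$ entry is $\tr(B_i Y B_j)$), the composition $x\mapsto \mathcal{B}(\tilde{A}(x))$ is a linear pencil of $d\times d$ symmetric matrices, giving a spectrahedral representation of size $d = \binom{\ell+1}{2}-1$. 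No step of this plan is a genuine obstacle; the only point worth stating carefully is the reduction $D_e \tilde{p}(x) = E_{\ell-1}(\tilde{A}(x))$, which follows directly from the coefficient expansion of $\det(\tilde{A}(x) + tI_\ell)$.
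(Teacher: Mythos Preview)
Your proof is correct and follows essentially the same approach as the paper: conjugate the pencil by $A_0^{-1/2}$ (you use a general factorization $A_0=M^TM$, which is equivalent) to reduce to the case where the direction is $I_\ell$, observe that $\Lambda_+(D_ep,e)$ is the preimage of $\psdcone{\ell}{1}$ under this linear map, and then invoke Theorem~\ref{thm:main}. The only cosmetic difference is that the paper cites \cite[Proposition~4]{saunderson2015polynomial} for the reduction step, whereas you derive it directly from the expansion $\det(Y+tI_\ell)=\sum_k t^k E_{\ell-k}(Y)$.
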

\begin{proof}
The hyperbolicity cone $\Lambda_+(D_ep,e)$ can be expressed as 
\[ \Lambda_{+}(D_ep,e)= \bigg\{x\in \RR^n\;:\;\sum_{i=1}^{n}A_0^{-1/2}A_iA_0^{-1/2}x_i \in \psdcone{n}{1}\bigg\}.\]
(see, e.g.,~\cite[Proposition 4]{saunderson2015polynomial}). Applying Theorem~\ref{thm:main} then gives 
\[ \Lambda_{+}(D_ep,e)	= \bigg\{x\in \RR^n\;:\;\mathcal{B}\left(\sum_{i=1}^{n}A_0^{-1/2}A_iA_0^{-1/2}x_i\right)\psd 0\bigg\}.\] 
\end{proof}
Our main result also yields a spectrahedral representation of $\orthant{n}{2}$,
the second  derivative relaxation of the non-negative orthant, of size
$\binom{n}{2}-1$. This is, in fact, a special case of Corollary~\ref{cor:spec}.
In the statement below, $V_n$ is any $n\times (n-1)$ matrix with columns that
span $\ones_n^\perp$.
 \begin{corollary}
 	\label{cor:e2}
	The hyperbolicity cone $\orthant{n}{2}$ has a spectrahedral
representation of size $\binom{n}{2}-1$ given by
 	\[ \orthant{n}{2} = \{x\in \RR^{n}\;:\; \mathcal{B}(V_n^T\diag(x)V_n) \psd 0\}.\]
 \end{corollary}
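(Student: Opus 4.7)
The plan is to realize $\orthant{n}{2}$ as the hyperbolicity cone of the first directional derivative (in direction $\ones_n$) of the elementary symmetric polynomial $e_{n-1}$, and then to apply Corollary~\ref{cor:spec} to the definite determinantal representation of $e_{n-1}$ already supplied by Proposition~\ref{prop:sanyal}. The key observation is that $\orthant{n}{2}$ is a derivative relaxation of a spectrahedral cone (the cone $\orthant{n}{1}$), so Corollary~\ref{cor:spec} is tailor-made for it.

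Concretely, I would take $V_n$ to be an $n \times (n-1)$ matrix whose columns form an \emph{orthonormal} basis for $\ones_n^\perp$. By Proposition~\ref{prop:sanyal}, there is a positive constant $c$ such that
\[ p(x) := \det\Bigl(\sum_{i=1}^{n}(V_n^T E_{ii} V_n)\,x_i\Bigr) = \det(V_n^T \diag(x) V_n) = c\,e_{n-1}(x), \]
where $E_{ii}$ is the $n\times n$ standard diagonal matrix unit. This is a definite determinantal representation in the sense of Corollary~\ref{cor:spec}, with $\ell = n-1$ and direction of hyperbolicity $e = \ones_n$, because orthonormality of the columns of $V_n$ forces $A_0 := \sum_i V_n^T E_{ii} V_n = V_n^T V_n = I_{n-1} \pd 0$. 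The identity $D_{\ones_n}^{(k)}(\prod_i x_i) = k!\,e_{n-k}$ recorded in the introduction then gives $D_{\ones_n} e_{n-1} = 2\, e_{n-2}$, so $D_{\ones_n} p = 2c\,e_{n-2}$ and hence $\Lambda_+(D_{\ones_n} p, \ones_n) = \Lambda_+(e_{n-2},\ones_n) = \orthant{n}{2}$.

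Substituting into the conclusion of Corollary~\ref{cor:spec} and using $A_0^{-1/2} = I_{n-1}$ to kill the conjugation, the general spectrahedral representation
$\{x : \mathcal{B}(\sum_i A_0^{-1/2}A_i A_0^{-1/2}x_i) \psd 0\}$ collapses to exactly
\[ \orthant{n}{2} = \{x \in \RR^n : \mathcal{B}(V_n^T \diag(x) V_n) \psd 0\},\]
of size $\binom{\ell + 1}{2} - 1 = \binom{n}{2} - 1$, as claimed. There is no real obstacle: the argument is a clean specialization of Corollary~\ref{cor:spec}. The only point requiring a small amount of care is choosing $V_n$ with orthonormal columns so that the conjugation by $A_0^{-1/2}$ in the general formula disappears and the representation takes the stated form directly in terms of $V_n^T \diag(x) V_n$.
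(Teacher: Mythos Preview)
Your proof is correct and follows essentially the same route as the paper: identify $\orthant{n}{2}$ as $\Lambda_+(D_{\ones_n} e_{n-1}, \ones_n)$, invoke Sanyal's determinantal representation of $e_{n-1}$, and apply Corollary~\ref{cor:spec}. Your explicit choice of $V_n$ with orthonormal columns (so that $A_0 = V_n^T V_n = I_{n-1}$) is a detail the paper's proof leaves implicit but which is in fact needed for the representation to take the clean form $\mathcal{B}(V_n^T\diag(x)V_n)$ without the $A_0^{-1/2}$ conjugation from Corollary~\ref{cor:spec}.
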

\begin{proof}
	First, we use the fact that  $\orthant{n}{2} = \Lambda_{+}(D_{\ones_n}
e_{n-1},\ones_n)$.  Then, by Sanyal's result (Proposition~\ref{prop:sanyal}),
we know that $e_{n-1}(x)$ has a definite determinantal representation.  The
stated result then follows directly from Corollary~\ref{cor:spec} with
polynomial $p = e_{n-1}$ and direction $e = \ones_n$. 
\end{proof}

\subsection{Questions}

\paragraph{Constructing spectrahedral representations}
It is natural to ask for which values of $k$ the  cones $\psdcone{n}{k}$ are spectrahedral. 
Our main result shows that $\psdcone{n}{1}$ has a spectrahedral representation of size $d = \binom{n+1}{2}-1$. 
The only other cases for which spectrahedral representations are known are the straightforward cases $k=n-1$ and $k=n-2$. 
If $k=n-1$ then 
\[ \psdcone{n}{n-1} = \{X\in \cS^n\;:\;\tr(X) \geq  0\}\]
is a spectrahedron (with a representation of size $1$). 
Since $\psdcone{n}{n-2}$ is a quadratic cone, it is a spectrahedron. To give an explicit representation, 
let $d = \binom{n+1}{2}-1$ and $B_1,B_2,\ldots,B_{d}$ be an \emph{orthonormal} basis
(with respect to the trace inner product) for the subspace $I_n^\perp$. Now 
$X\in \psdcone{n}{n-2}$ if and only if (see, e.g.,~\cite[Section 5.1]{saunderson2015polynomial})
\begin{equation}	
\label{eq:Snn2}
 \tr(X) \geq 0\;\;\textup{and}\;\;
\tr(X)^2 - \tr(X^2) = \left[\sqrt{\frac{n-1}{n}}\tr(X)\right]^2 - \sum_{i=1}^{d}\tr(B_iX)^2 \geq 0.
\end{equation}
By a well-known spectrahedral representation of the second-order cone, \eqref{eq:Snn2} holds if and only if 
\begin{equation}	
\label{eq:quad} \sqrt{\frac{n-1}{n}}\tr(X)I_{d} +  \begin{bmatrix} \tr(B_1X) & \tr(B_2X) & \tr(B_3X)&  \cdots & \tr(B_dX)\\
	\tr(B_2X) &  - \tr(B_1X)& 0  &\cdots &0\\
	\tr(B_3X)& 0 &  - \tr(B_1X) & \cdots & 0\\
	\vdots & \vdots & \vdots & \ddots & \vdots\\
	\tr(B_d X) & 0 &0 &\cdots &  - \tr(B_1X)\end{bmatrix} \psd 0.
\end{equation}
So we see that $\psdcone{n}{n-2}$ has a spectrahedral representation of size $d= \binom{n+1}{2}-1$. 
At this stage, it is unclear how to extend the approach in this paper to the remaining cases.
\begin{question}
	Are the cones $\psdcone{n}{k}$ spectrahedral for $k=2,3,\ldots,n-3$?
\end{question}	
At first glance, it may seem that Corollary~\ref{cor:spec} allows us to
construct a spectrahedral representation for $\psdcone{n}{2}$ from a
spectrahedral representation for $\psdcone{n}{1}$. However, this is not the
case.  To apply Corollary~\ref{cor:spec} to this situation, we would need a
definite determinantal representation of $E_{n-1}(X)$, which our main result
(Theorem~\ref{thm:main}) does not provide.

\paragraph{Lower bounds on size}

Another natural question concerns the size of spectrahedral representations of
hyperbolicity cones. Given a hyperbolicity cone $K$, there is a unique (up to
scaling) hyperbolic polynomial $p$ of smallest degree $d$ that vanishes on the
boundary of $K$~(see, e.g.,~\cite{kummer2016two}). Clearly any spectrahedral
representation must have size at least $d$, but it seems that in some cases the
smallest spectrahedral representation (if it exists at all) must have larger
size. 
\begin{question}
	Is there a spectrahedral representation of $\psdcone{n}{1}$ with size smaller than $\binom{n+1}{2}-1$?
\end{question}
Recently, there has been considerable interest in developing methods for
producing lower bounds on the size of projected spectrahedral descriptions of
convex sets (see, e.g.,~\cite{fawzi2015positive})	. There has been much
less development in the case of lower bounds on the size of spectrahedral
descriptions. The main work in this direction is due to
Kummer~\cite{kummer2016two}.  For instance it follows from~\cite[Theorem
1]{kummer2016two} that any spectrahedral representation of the quadratic cone
$\psdcone{n}{n-2}$ must have size at least
$\frac{1}{2}\left[\binom{n+1}{2}-1\right]$.  Furthermore, in the special case
that $\binom{n+1}{2} - 1 = 2^{k}+1$ for some $k$ (which occurs if $n=3$ and
$k=2$ or $n=4$ and $k=3$) then Kummer's work shows that any spectrahedral
representation of $\psdcone{n}{n-2}$ must have size at least
$\binom{n+1}{2}-1$. This establishes that the construction in~\eqref{eq:quad}
is optimal when $n=3$ and $n=4$. Furthermore, in the case $n=3$ we have that
$\psdcone{n}{1} = \psdcone{n}{n-2}$. Hence  our spectrahedral representation
for $\psdcone{n}{1}$ is also optimal if $n=3$.

\section*{Acknowledgments}
I would like to thank Hamza Fawzi for providing very helpful feedback on a draft of this paper.

\bibliographystyle{alpha}
\bibliography{drpsd}

\end{document}